\documentclass[12pt] {amsart}
\usepackage{amsmath,amssymb,amsbsy,amsfonts,latexsym,amsopn,amstext,
                                               amsxtra,euscript,amscd,bm}
\usepackage{color}                                               

\usepackage[pdftex]{hyperref}
\hypersetup{pdfstartview=FitH,	pdfpagemode=UseNone}




\newfont{\teneufm}{eufm10}
\newfont{\seveneufm}{eufm7}
\newfont{\fiveeufm}{eufm5}
%
%
\newfam\eufmfam
                \textfont\eufmfam=\teneufm \scriptfont\eufmfam=\seveneufm
                \scriptscriptfont\eufmfam=\fiveeufm
%
%




\def\bbbc{{\mathchoice {\setbox0=\hbox{$\displaystyle\rm C$}\hbox{\hbox
to0pt{\kern0.4\wd0\vrule height0.9\ht0\hss}\box0}}
{\setbox0=\hbox{$\textstyle\rm C$}\hbox{\hbox
to0pt{\kern0.4\wd0\vrule height0.9\ht0\hss}\box0}}
{\setbox0=\hbox{$\scriptstyle\rm C$}\hbox{\hbox
to0pt{\kern0.4\wd0\vrule height0.9\ht0\hss}\box0}}
{\setbox0=\hbox{$\scriptscriptstyle\rm C$}\hbox{\hbox
to0pt{\kern0.4\wd0\vrule height0.9\ht0\hss}\box0}}}}
\def\bbbq{{\mathchoice {\setbox0=\hbox{$\displaystyle\rm
Q$}\hbox{\raise 0.15\ht0\hbox to0pt{\kern0.4\wd0\vrule
height0.8\ht0\hss}\box0}} {\setbox0=\hbox{$\textstyle\rm
Q$}\hbox{\raise 0.15\ht0\hbox to0pt{\kern0.4\wd0\vrule
height0.8\ht0\hss}\box0}} {\setbox0=\hbox{$\scriptstyle\rm
Q$}\hbox{\raise 0.15\ht0\hbox to0pt{\kern0.4\wd0\vrule
height0.7\ht0\hss}\box0}} {\setbox0=\hbox{$\scriptscriptstyle\rm
Q$}\hbox{\raise 0.15\ht0\hbox to0pt{\kern0.4\wd0\vrule
height0.7\ht0\hss}\box0}}}}
\def\bbbt{{\mathchoice {\setbox0=\hbox{$\displaystyle\rm
T$}\hbox{\hbox to0pt{\kern0.3\wd0\vrule height0.9\ht0\hss}\box0}}
{\setbox0=\hbox{$\textstyle\rm T$}\hbox{\hbox
to0pt{\kern0.3\wd0\vrule height0.9\ht0\hss}\box0}}
{\setbox0=\hbox{$\scriptstyle\rm T$}\hbox{\hbox
to0pt{\kern0.3\wd0\vrule height0.9\ht0\hss}\box0}}
{\setbox0=\hbox{$\scriptscriptstyle\rm T$}\hbox{\hbox
to0pt{\kern0.3\wd0\vrule height0.9\ht0\hss}\box0}}}}
\def\bbbs{{\mathchoice
{\setbox0=\hbox{$\displaystyle     \rm S$}\hbox{\raise0.5\ht0\hbox
to0pt{\kern0.35\wd0\vrule height0.45\ht0\hss}\hbox
to0pt{\kern0.55\wd0\vrule height0.5\ht0\hss}\box0}}
{\setbox0=\hbox{$\textstyle        \rm S$}\hbox{\raise0.5\ht0\hbox
to0pt{\kern0.35\wd0\vrule height0.45\ht0\hss}\hbox
to0pt{\kern0.55\wd0\vrule height0.5\ht0\hss}\box0}}
{\setbox0=\hbox{$\scriptstyle      \rm S$}\hbox{\raise0.5\ht0\hbox
to0pt{\kern0.35\wd0\vrule height0.45\ht0\hss}\raise0.05\ht0\hbox
to0pt{\kern0.5\wd0\vrule height0.45\ht0\hss}\box0}}
{\setbox0=\hbox{$\scriptscriptstyle\rm S$}\hbox{\raise0.5\ht0\hbox
to0pt{\kern0.4\wd0\vrule height0.45\ht0\hss}\raise0.05\ht0\hbox
to0pt{\kern0.55\wd0\vrule height0.45\ht0\hss}\box0}}}}
\def\bbbz{{\mathchoice {\hbox{$\sf\textstyle Z\kern-0.4em Z$}}
{\hbox{$\sf\textstyle Z\kern-0.4em Z$}} {\hbox{$\sf\scriptstyle
Z\kern-0.3em Z$}} {\hbox{$\sf\scriptscriptstyle Z\kern-0.2em
Z$}}}}

\def \balpha{\bm{\alpha}}
\def \bbeta{\bm{\beta}}
\def \bgamma{\bm{\gamma}}

\newtheorem{theorem}{Theorem}
\newtheorem{lemma}[theorem]{Lemma}

\newtheorem{cor}[theorem]{Corollary}

\def\squareforqed{\hbox{\rlap{$\sqcap$}$\sqcup$}}
\def\qed{\ifmmode\squareforqed\else{\unskip\nobreak\hfil
\penalty50\hskip1em\null\nobreak\hfil\squareforqed
\parfillskip=0pt\finalhyphendemerits=0\endgraf}\fi}

\def\cA{{\mathcal A}}

\def\cC{{\mathcal C}}

\def\cG{{\mathcal G}}

\def\cI{{\mathcal I}}

\def\cO{{\mathcal O}}
\def\cP{{\mathcal P}}

\def\cS{{\mathcal S}}

\def\cU{{\mathcal U}}
\def\cV{{\mathcal V}}

\def \sf {\mathfrak s}

\def\Oes{\cO_{e,s}}

\def\Oet{\cO_{e,t}}

\def\fO{\mathfrak{O}}

\def\Oef{\fO_{e,f}}
\def\Oeg{\fO_{e,g}}

\def\Im{{\mathrm{Im}}}




\newcommand{\ignore}[1]{}

\def\vec#1{\mathbf{#1}}



\hyphenation{re-pub-lished}

\def \C{\mathbb{C}}
\def \F{\mathbb{F}}
\def \K{\mathbb{K}}

\def \Z{\mathbb{Z}}

\def \Q{\mathbb{Q}}

\def \Z{\mathbb{Z}}

\def\mand{\qquad\mbox{and}\qquad}

\def\\{\cr}
\def\({\left(}
\def\){\right)}
\def\fl#1{\left\lfloor#1\right\rfloor}
\def\rf#1{\left\lceil#1\right\rceil}


\title[Interpolation and Identity Testing 
from Powers]{Polynomial Interpolation and Identity Testing 
from High Powers over Finite Fields}

\author[Ivanyos et al.]{G{\'a}bor Ivanyos}
\address{Institute for Computer Science and Control, Hungarian Academy of Sciences,     H-1111 Budapest, Hungary}
\email{gabor.ivanyos@sztaki.mta.hu}

\author[{}]{Marek Karpinski}
\address{Department of Computer Science, Bonn University, 
 53113 Bonn, Germany} 
\email{marek@cs.uni-bonn.de}

\author[{}]{Miklos Santha}
\address{CNRS, Universit{\'e} Paris Diderot, 75013 Paris, France and CQT, National University of Singapore, 117543 Singapore}
\email{miklos.santha@liafa.univ-paris-diderot.fr}

\author[{}]{Nitin Saxena}
\address{Department of Computer Science and Engineering, 
 Indian Institute of Technology, 
Kanpur, UP 208016, India}
\email{nitin@cse.iitk.ac.in}

\author[{}]{Igor E.~Shparlinski}
\address{Department of Pure Mathematics, University of 
New South Wales, Sydney, NSW 2052 Australia}
\email{igor.shparlinski@unsw.edu.au}

\begin{document}

\keywords{hidden polynomial power, black-box interpolation, Nullstellensatz,  rational function, determistic algorithm, randomised algorithm, quantum algorithm}

\subjclass{11T06,  11Y16, 68Q12, 68Q25}

\begin{abstract} 
We consider the problem of recovering (that is, interpolating) and identity testing of a ``hidden'' monic polynomial $f$, given an oracle access to $f(x)^e$ for $x\in\F_q$ (extension fields access is not permitted). The naive 
interpolation algorithm needs $O(e \deg f)$ queries and thus requires $e\deg f<q$. We design algorithms that are asymptotically better in certain cases; requiring only $e^{o(1)}$ queries to the oracle.
In the randomized (and quantum)
setting, we give a substantially better interpolation algorithm, that requires only $O(\deg f \log q)$ queries. Such results have been known before only for the special case of a linear $f$, called the {\em hidden shifted power} problem.

We use techniques from algebra, such as effective versions of Hilbert's Nullstellensatz, and analytic number theory, such as results on the distribution  of rational functions in subgroups and character sum estimates.
\end{abstract}

 \maketitle

\section{Introduction} 

Let $\F_q$ be  a finite field of  $q$ elements. 
Here we consider several problems of recovering 
and identity testing of 
a ``hidden'' monic polynomial $f\in \F_q[X]$, given  
$\Oef$  an oracle that on every input $x \in \F_q$ outputs
$\Oef(x) = f(x)^e$ for some large positive integer $e\mid q-1$. 

More precisely, we consider the following problem 
{\it Interpolation  from Powers\/}:
\begin{quote}
given an oracle  $\Oef$ for some unknown  monic polynomial $f\in \F_q[X]$,
 recover $f$.
\end{quote}

We also consider the following two versions of  the
{\it Identity Testing from Powers\/}:
\begin{quote}
given an oracle  $\Oef$ for some unknown monic polynomial $f\in \F_q[X]$
and another known polynomial $g\in \F_q[X]$,  decide whether $f = g$,
\end{quote}
and
\begin{quote}
given two oracles  $\Oef$ and $\Oeg$ for some unknown monic 
polynomials $f,g\in \F_q[X]$,  decide whether $f = g$.\end{quote}

In particular, for a linear polynomial 
$f(X) = X+s$, with a `hidden' $a\in \F_q$, 
we denote  $\Oef = \Oes$.  We remark that in this case 
there are two naive algorithms that work for linear polynomials:
\begin{itemize}

\item One can query $\Oes$ at $e+1$ arbitrary points and then 
using a  fast interpolation
algorithm, see~\cite{vzGG}, get a deterministic algorithm of
complexity $e (\log q)^{O(1)}$
(as in~\cite{vzGG}, we measure the complexity of
an algorithm by the number of bit operations in the standard RAM model).

\item For probablistic testing one can  query $\Oes$ (and $\Oet$) at randomly chosen elements $x \in \F_q$ until the desired level of confidence is achieved (note that the equation $(x+s)^e = (x+t)^e$ has at most $e$ solutions $x \in \F_q$). 

\end{itemize}

These naive algorithms have been improved by Bourgain, Garaev, Konyagin and Shparlinski~\cite{BGKS} in several cases (with respect to both the time complexity and the number of queries).

Furthermore, in the case when a quantum version of the 
oracle $\Oes$ is given, van Dam,  Hallgren and 
Ip~\cite{vDamHallgIp} have given a polynomial time 
quantum algorithm which recovers $s$, see also~\cite{vDam}.

For non-linear polynomials $f\in \F_q[X]$ some classical 
and quantum algorithms are given by Russell and 
Shparlinski~\cite{RusShp}. However they do not reach the 
level of those of~\cite{BGKS,vDam,vDamHallgIp} due to 
several additional obstacles which arise for non-linear 
polynomials. For example, we note that both the interpolation 
and random sampling algorithms fail if $e \deg f > q$. 
Indeed, note that queries from the extension field are not permitted,
and $\F_q$ may not have enough elements to make these algorithms 
correct. 

Here we consider both classical and quantum algorithms.
In particular, we extend the results of~\cite[Section~3.3]{BGKS}
to arbitrary monic polynomials $f \in \F_p[X]$ for a prime $p$. 
These results are based on 
some bounds of character sums and also new results about 
the order of multiplicative group generated by the values of a
rational function on several consecutive integers.

Further, we also consider quantum algorithms. However, our setting 
is quite different from those of~\cite{vDam,vDamHallgIp}  as
we do not assume that the values of $f$ are given by a quantum 
oracle, rather the algorithm works with the classical oracle $\Oef$.

The above questions appear naturally in understanding the pseudorandomness of the {\em Legendre symbol} $\left(\frac{f(x)}{p}\right)$. In particular, this has applications in the cryptanalysis of certain homomorphic cryptosystems. See \cite{BM, BL, Dam,   MvOV} for further details.

Note that the above questions are closely related to the 
general problem of oracle (also sometimes called ``black-box'') polynomial interpolation
and identity testing for arbitrary  polynomials (though forbidding the use of field extensions makes the problems harder),
see~\cite{Sax09,Sax14,SY} and the references therein.

Throughout the paper, any implied constants in the symbols $O$,
$\ll$  and $\gg$  may occasionally, where obvious, 
depend on the degree $d$ of the polynomial $f$ (\& an integer parameter $\nu$), and are absolute otherwise. 
We recall that the
notations $U = O(V)$, $U \ll V$ and  $V \gg U$ are all equivalent to the
statement that the inequality $|U| \le c V$ holds with some
constant $c> 0$.

\section{Identity Testing on Classical Computers}
\label{sec:class ident test}
\subsection{Main results}

Here we consider the identity testing case of two unknown {\em monic} 
polynomials $f,g\in \F_q[X]$ of degree $d$ given 
the oracles $\Oef$ and $\Oeg$. We remark that if $f/g$ is
an $(q-1)/e$-th power of a 
nonconstant 
 rational function 
over $\F_q$ then  it is impossible to distinguish 
between $f$ and $g$ from the oracles $\Oef$ and $\Oeg$.
We write $f\sim_e g$ in this case, and $f\not \sim_e g$
otherwise. 

We note that it is shown in the proof of~\cite[Theorem~6]{RusShp}
that the Weil bound of multiplicative character sums 
(see~\cite[Theorem~11.23]{IwKow})
implies that given two oracles  $\Oef$ and $\Oeg$ for some unknown monic 
polynomials $f,g\in \F_q[X]$  with $f\not \sim_e g$ one can  decide 
whether $f = g$
in time $q^{1/2+o(1)}$. Note that the result of~\cite{RusShp}
is stated only for prime fields $\F_p$ but it can be extended to
arbitrary fields at the cost of only typographical changes. The same holds for 
here the results of Section~\ref{sec:rand_quant interp} but the results 
 of Section~\ref{sec:class ident test} hold only for prime fields.

For ``small'' values of $e$, over prime fields $\F_p$, we have a stronger result.

\begin{theorem}[Small $e$]
\label{thm:Small_e} For a prime $p$ and a positive integer
$e\mid p-1$, with $e\le p^{\delta}$ for some fixed
$\delta >0$, given two oracles $\Oef$ and $\Oeg$  for some 
unknown monic polynomials $f,g\in \F_p[X]$ of degree $d$ 
with $f\not \sim_e g$, 
there is a deterministic algorithm to decide 
whether $f = g$ in $e^{c_0(d)\delta^{1/(2d-1)}}$ queries 
to  the oracles $\Oef$ and $\Oeg$, where $c_0(d)$ depends only on $d$.
\end{theorem}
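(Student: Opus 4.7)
Write $R = f/g \in \F_p(X)$, a rational function of degree at most $d$. For $x \in \F_p$ with $g(x)\neq 0$, one has $\Oef(x) = \Oeg(x)$ iff $R(x)^e = 1$, i.e.\ $R(x) \in \mu_e$, the subgroup of $e$-th roots of unity in $\F_p^*$ (of order $e$, since $e \mid p-1$); the finitely many zeros of $g$ can be handled separately by directly comparing with $f(x) = 0$. The hypothesis $f \not\sim_e g$ is exactly the statement that $R$ is not a $(p-1)/e$-th power in $\F_p(X)$, which is the non-degeneracy condition needed to make the relevant multiplicative character sums non-trivial.

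The algorithm is to query both oracles at the consecutive integers $x = 1, 2, \ldots, N$ with
\[
N = \lceil e^{c_0(d)\,\delta^{1/(2d-1)}} \rceil,
\]
and to declare $f = g$ iff every pair of answers matches. Correctness reduces to exhibiting, under $f \not\sim_e g$, some $x_0 \in [1,N]$ with $R(x_0) \notin \mu_e$; a convenient sufficient condition is that the subgroup $\Gamma = \langle R(x) : 1 \le x \le N,\ g(x)\neq 0\rangle \le \F_p^*$ has order strictly greater than $e$. This is precisely the problem of \emph{lower-bounding the order of the multiplicative subgroup generated by the values of a rational function on consecutive integers}, flagged in the introduction as the central new analytic ingredient.

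For the bound on $|\Gamma|$ I expand the indicator of $\mu_e$ by orthogonality of characters of $\F_p^*$:
\[
\#\{x \in [1,N] : R(x) \in \mu_e\} \;=\; \frac{e}{p-1}\sum_{\chi \in \mu_e^{\perp}} \sum_{x=1}^{N} \chi(R(x)),
\]
where $\mu_e^{\perp}$ is the subgroup of characters trivial on $\mu_e$, of order $(p-1)/e$. Under $f \not\sim_e g$, the characters $\chi$ for which $\chi \circ R$ is globally constant form a \emph{proper} subgroup of $\mu_e^{\perp}$, so their total contribution is bounded by a constant factor strictly below $N$; the remaining $\chi$ require a genuine short-sum bound. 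In the target regime $N \ll \sqrt{p}$, naive Weil combined with completion is far too weak, so the essential input is a Burgess-/Bourgain-Konyagin-Shparlinski-type character sum estimate for $\sum_x \chi(R(x))$ with $R$ rational of degree $d$, producing an upper bound of the shape $N^{1-\kappa/(2d-1)}\,e^{\lambda/(2d-1)}\,p^{o(1)}$ for explicit absolute constants $\kappa,\lambda$ depending only on $d$. Combined with $e \le p^{\delta}$ and forcing this expression to be strictly less than $N$ delivers exactly $N = e^{c_0(d)\,\delta^{1/(2d-1)}}$ for a suitable $c_0(d)$. The main obstacle is securing this short-interval/small-subgroup character estimate with the correct $d$-dependence -- this is the analytic heart of the argument and the point at which the linear-case machinery of~\cite{BGKS} must be extended from $f(X) = X+s$ to arbitrary monic $f \in \F_p[X]$.
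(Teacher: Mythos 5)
Your reduction and your choice of query points coincide with the paper's: query both oracles at consecutive integers $x=1,\ldots,N$, and note that if all answers agree then the values $f(x)/g(x)$ lie in the subgroup $\cG_e\subseteq\F_p^*$ of order $e$, so it suffices to show this is impossible for distinct $f,g$ when $N$ is as small as $e^{c_0(d)\delta^{1/(2d-1)}}$. The gap is in how you propose to rule this out. The character-sum estimate you defer to --- a nontrivial bound for $\sum_{x=1}^{N}\chi\bigl(f(x)/g(x)\bigr)$ of the shape $N^{1-\kappa/(2d-1)}e^{\lambda/(2d-1)}p^{o(1)}$ --- is not available and would be a breakthrough well beyond Burgess: here $N\le e^{c_0(d)\delta^{1/(2d-1)}}\le p^{c_0(d)\delta^{2d/(2d-1)}}$ is an arbitrarily small power of $p$, far below the $p^{1/4+o(1)}$ threshold where Burgess-type bounds start to work (and such bounds are in any case not known for general rational arguments of degree $d$), while the characters in $\mu_e^{\perp}$ have large order, so none of the Bourgain--Konyagin--Shparlinski subgroup technology applies to a sum over an interval of consecutive integers. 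Moreover the claimed shape is not self-consistent in the intended regime: to beat the trivial bound $N$ you would need $N^{\kappa/(2d-1)}>e^{\lambda/(2d-1)}p^{o(1)}$, forcing $N$ to exceed a fixed positive power of $e$ times $p^{o(1)}$, whereas the whole point is to take $N$ equal to a tiny power of $e$ (and $e$ itself may be as small as a power of $\log p$, so even a $p^{o(1)}$ loss is fatal). Thus the ``analytic heart'' you leave open is exactly the step that no per-point counting or short character-sum argument can supply.

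The paper avoids short character sums entirely. It sets $\nu=\fl{(c(d)/2\delta)^{2d-1}}$ and $h\approx e^{1/\nu}$, queries at $x=1,\ldots,h$, and shows (Lemma~\ref{lem:ProdSet}) that for distinct monic $f,g$ of degree $d$ the $\nu$-fold product set $\cA^{(\nu)}$ of $\cA=\{f(x)/g(x):1\le x\le h\}$ has size at least $h^{\nu+o(1)}>e$, contradicting $\cA^{(\nu)}\subseteq\cG_e$. That lower bound is proved not analytically but by lifting to characteristic zero: an effective Nullstellensatz with height bounds (Lemma~\ref{lem:Hilb}) shows that if all multiplicative coincidences among the values forced the coefficient vectors to be equal, then $p$ would divide a nonzero integer of size $h^{O(1)}$, which is impossible for $h<p^{c(d)\nu^{-2d}}$; Chang's lemma (Lemma~\ref{lem:SmallZero}) then produces a low-height point in a number field realizing the same coincidence pattern, and the product-set bound in number fields (Corollary~\ref{cor:ProdSet}) gives $\#\cA^{(\nu)}\ge h^{\nu+o(1)}$. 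The exponent $\delta^{1/(2d-1)}$ in the query count comes precisely from the amplification $h^{\nu}>e$ with $h\approx e^{1/\nu}$, not from any cancellation in a character sum. Your sufficient condition (the group generated by the queried values has order exceeding $e$) is in the right spirit --- it is essentially what the product-set bound certifies --- but the route you propose for proving it does not go through, and replacing it by the Nullstellensatz/height argument is the actual content of the paper's proof.
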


In particular, we see from (the proof of)  Theorem~\ref{thm:Small_e} 
that if $e= p^{o(1)}$ and $e\to\infty$ then we can test whether $f=g$ in
time $e^{o(1)} (\log p)^{O(1)}$ in $e^{o(1)}$ oracle calls.

For intermediate values of $e$, the following result  complements 
 both Theorem~\ref{thm:Small_e} and the result of~\cite{RusShp}.
We, however, have to assume that  the polynomials $f$ and $g$ 
are {\em irreducible}.

\begin{theorem}[Medium $e$]
\label{thm:Medium_e} For a prime $p$ and a positive integer
$e\mid p-1$, with $e\le p^{\eta-\delta}$ for some fixed
$\delta >0$, given two oracles $\Oef$ and $\Oeg$  for some 
unknown  monic  
 polynomials $f,g\in \F_p[X]$ of 
degree $d\ge 1$ with $f\not \sim_e g$, 
there is a deterministic algorithm to decide 
whether $f = g$ in $e^{\kappa+\delta}$ queries to 
the oracles $\Oef$ and $\Oeg$, where
$$ 
\eta = \frac{4d-1}{4d^2(d+1)^2} \mand
\kappa =  \frac{2d}{4d-1}. 
$$
\end{theorem}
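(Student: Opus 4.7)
The plan is to choose an interval of consecutive elements of $\F_p$ and query both oracles there, declaring $f=g$ if and only if the two responses coincide at every queried point. Concretely, set $N := \lceil e^{\kappa+\delta}\rceil$, let $\cS := \{0,1,\ldots,N-1\}\subset \F_p$ (the standard lift is legitimate since $N \le p$ under the hypothesis on $e$), query $\Oef$ and $\Oeg$ on $\cS$, and accept iff $\Oef(x)=\Oeg(x)$ for every $x \in \cS$. Completeness is immediate; for soundness, assuming $f \neq g$ and $f \not\sim_e g$, we must exhibit some $x \in \cS$ with $f(x)^e \neq g(x)^e$.

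Since $f \neq g$ are both monic irreducible of degree $d$, they are coprime; moreover when $d \ge 2$, irreducibility forces $g(x) \ne 0$ for every $x \in \F_p$, so $h(X) := f(X)/g(X)$ is a well-defined rational function of degree at most $2d$ on all of $\F_p$ (the case $d=1$ reduces to the linear setting of \cite{BGKS}). Because $e \mid p-1$, the subgroup $\mu_e \le \F_p^*$ of $e$-th roots of unity has order exactly $e$, and the equation $f(x)^e = g(x)^e$ is equivalent to $h(x) \in \mu_e$. It is therefore enough to establish
\[
T \ := \ \#\{x \in \cS : h(x) \in \mu_e\} \ < \ N.
\]

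The main analytic input is a Stepanov/Weil-type bound for the number of arguments in a short interval at which a rational function of bounded degree attains values in a prescribed multiplicative subgroup of $\F_p^*$, generalising the linear estimates of \cite{BGKS}. The hypothesis $f \not\sim_e g$ translates into $h$ not being a $(p-1)/e$-th power of any rational function, and this is exactly the non-degeneracy condition under which the bound is non-trivial. One builds an auxiliary polynomial that vanishes to a prescribed order $\nu$ at every $x \in \cS$ with $h(x) \in \mu_e$ and then compares its degree against the number of zeros it is forced to carry, obtaining an inequality of the shape
\[
T \ \le \ C(d)\, N^{\,1-\alpha(d)}\, e^{\,\beta(d)}.
\]
The required conclusion $T<N$ reduces to $e^{\beta} < N^{\alpha}$, and this is precisely what dictates the choice $\kappa = 2d/(4d-1)$ for the length of the query interval.

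The principal obstacle is the joint optimisation of the Stepanov construction for a rational function whose numerator and denominator both have degree $d$: the auxiliary polynomial's total degree grows with $d$ and with the vanishing parameter $\nu$, and one must simultaneously control (a) its degree, (b) the combinatorial count of zeros it is forced to have on $\{x \in \cS : h(x) \in \mu_e\}$, and (c) the requirement that the construction remains non-trivial in characteristic $p$. The third constraint is what forces the upper bound $e \le p^{\eta-\delta}$, and carefully tracking the dependence on $d$ across all three points is what produces the precise values $\eta = (4d-1)/\bigl(4d^{2}(d+1)^{2}\bigr)$ and $\kappa = 2d/(4d-1)$ appearing in the statement.
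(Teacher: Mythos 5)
Your overall strategy is the same as the paper's: query both oracles on a short interval of consecutive residues, note that agreement everywhere forces the values $f(x)/g(x)$ to lie in the subgroup $\cG_e\subseteq\F_p^*$ of order $e$, and then appeal to a bound saying that a nondegenerate rational function of degree $d$ cannot take all its values on a short interval inside a small multiplicative subgroup. The problem is that you never actually supply that bound. You describe it only as ``an inequality of the shape $T \le C(d)\,N^{1-\alpha(d)}\,e^{\beta(d)}$'' with unspecified $\alpha(d),\beta(d)$, sketch in one sentence a Stepanov-type construction, and then assert that ``carefully tracking the dependence on $d$'' produces $\eta=(4d-1)/\bigl(4d^2(d+1)^2\bigr)$ and $\kappa=2d/(4d-1)$. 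That is precisely the hard quantitative content of the theorem, and it is left undone: no auxiliary polynomial is constructed, no degree/multiplicity count is performed, no explicit exponents are obtained, and no verification is given that the constraint forcing $e\le p^{\eta-\delta}$ really yields this value of $\eta$. In the paper this step is not reproved either, but it is imported as a precise statement (Lemma~\ref{lem:ValSet}, a special case of a theorem of G\'omez and Shparlinski) with explicit exponents $\vartheta=1/(2d(d+2))$, $\rho=(d+1)^2/(2(d+2))$, $\tau=1/(4d)$, from which the theorem's constants follow by the short computation $\kappa=1/(2-2\tau)$ and $\eta=\vartheta(1-\tau)/\rho$. Without either a proof of such a bound or at least its precise formulation plus that computation, your argument cannot be checked and the specific values of $\eta$ and $\kappa$ are unjustified; carrying out a Stepanov construction for $f/g$ with both numerator and denominator of degree $d$ on a short interval is a genuinely nontrivial piece of work, not a routine adaptation one can wave at.

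Two smaller points. First, you use irreducibility of $f$ and $g$ to get coprimality; this matches the paper's standing assumption for the medium-$e$ case (and coprimality is exactly what the value-set lemma needs), but you should state it explicitly since the theorem text you were given only says ``monic.'' Second, your claim that the case $d=1$ ``reduces to the linear setting'' is an unexamined deferral; the paper's argument handles all $d\ge 1$ uniformly through the same lemma, so no separate case is needed once the correct input is in place.
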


The proofs of Theorems~\ref{thm:Small_e} and~\ref{thm:Medium_e} are 
given below in Sections~\ref{sec: proof small e} and~\ref{sec: proof med e},
respectively.

\subsection{Background from arithmetic algebraic geometry}

Our argument makes use of a slight modification of~\cite[Lemma~23]{BGKS}, 
which is based on a quantitative 
version of effective Hilbert's Nullstellensatz given 
by  D'Andrea,  Krick and  Sombra~\cite{DKS}, which
improved the previous estimates due  to Krick,  Pardo and   Sombra~\cite{KiPaSo}.

As usual, we define the {\em logarithmic height} of a nonzero polynomial $P \in
\Z[Z_1, \ldots, Z_n]$   as the maximum
logarithm of the largest (by absolute value) coefficient of $P$.

The next statement is essentially~\cite[Lemma~23]{BGKS}, 
however we now use~\cite[Theorem~2]{DKS} instead of~\cite[Theorem~1]{KiPaSo}.

\begin{lemma}
\label{lem:Hilb} 
Let $P_1, \ldots, P_N \in \Z[Z_1, \ldots,
Z_n]$ be $N\ge 2$ polynomials in $n$ variables of degree at most
$D\ge 3$ and of logarithmic height at most $H$ 
and let  $R \in \Z[Z_1, \ldots,
Z_n]$ be a polynomial in $n$ variables of degree at most
$d\ge 3$ and of logarithmic height at most $h$  such that $R$ 
vanishes on the variety
$$
P_1(Z_1, \ldots, Z_n) = \ldots =P_N(Z_1, \ldots, Z_n) = 0.
$$
There are polynomials $Q_1, \ldots, Q_N\in \Z[Z_1, \ldots, Z_n]$ and  positive integers $A$ and $r$
with
$$
\log A \le  2(n+1)dD^{n}H +   3D^{n+1}h + C(d,D,n,N),$$
such that
$$
P_1Q_1+ \ldots + P_NQ_N = AR^r,
$$
where $C(d,D,n,N)$  depends only on $d$, $D$, $n$ 
and $N$.
\end{lemma}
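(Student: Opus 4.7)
The plan is to follow the proof of~\cite[Lemma~23]{BGKS} essentially verbatim, replacing the application of~\cite[Theorem~1]{KiPaSo} by an application of~\cite[Theorem~2]{DKS}. Concretely, I first recall that~\cite[Theorem~2]{DKS} is an \emph{arithmetic effective Nullstellensatz}: given polynomials $P_1,\ldots,P_N\in\Z[Z_1,\ldots,Z_n]$ of degree at most $D$ and logarithmic height at most $H$, and any polynomial $R\in\Z[Z_1,\ldots,Z_n]$ of degree at most $d$ and logarithmic height at most $h$ which vanishes on the affine variety defined by the $P_i$, it produces polynomials $Q_1,\ldots,Q_N\in\Z[Z_1,\ldots,Z_n]$, a positive integer $A$, and an exponent $r\ge 1$ satisfying
\[
P_1Q_1+\cdots+P_NQ_N \;=\; AR^r,
\]
together with explicit upper bounds on $\log A$, $r$, and the degrees and heights of the $Q_i$, expressed as an affine combination of the height parameters $H$ and $h$ with coefficients that are polynomial in $D$, $d$ and the number of variables $n$.

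Next, I would extract from~\cite[Theorem~2]{DKS} the precise leading-order behavior of $\log A$ in terms of $H$ and $h$. The improved bounds there give a contribution of the form $2(n+1)dD^n H$ from the height $H$ of the $P_i$ and a contribution of $3D^{n+1}h$ from the height $h$ of $R$; all remaining terms (those not involving $H$ or $h$, e.g.\ the $\log$-type contributions depending on $D$, $d$, $N$, $n$) are bounded purely in terms of $d$, $D$, $n$ and $N$ and can therefore be absorbed into a single function $C(d,D,n,N)$, matching the form of the stated bound.

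Finally, I would verify that the setup in which~\cite[Theorem~2]{DKS} is applied coincides with the hypotheses of the lemma: namely, that the hypothesis ``$R$ vanishes on the common zero set of the $P_i$'' in $\overline{\Q}^n$ is exactly what is needed, no genericity or regularity assumption on the $P_i$ is required, and the integer coefficient ring is the correct setting. The proof then consists simply of invoking~\cite[Theorem~2]{DKS} and reading off the bound. The only mildly delicate point, as in~\cite{BGKS}, is ensuring that the zero set considered by~\cite{DKS} agrees with the affine variety appearing in our statement (rather than a projective closure or a multi-projective version), but this is a direct formatting issue rather than a mathematical obstacle; one just specializes the general multi-projective result of~\cite{DKS} to the affine setting, picking up only constants depending on $d,D,n,N$ which are absorbed into $C(d,D,n,N)$.
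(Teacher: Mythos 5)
Your proposal is correct and coincides with what the paper does: the authors give no independent argument, but simply note that the lemma is essentially~\cite[Lemma~23]{BGKS} with the arithmetic Nullstellensatz of~\cite[Theorem~2]{DKS} substituted for~\cite[Theorem~1]{KiPaSo}, which is exactly your plan of rerunning the BGKS argument and reading off the improved height bound, absorbing all terms not involving $H$ or $h$ into $C(d,D,n,N)$.
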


We note that using Lemma~\ref{lem:Hilb} in the argument of~\cite{BGKS}
allows to replace $\nu^{-4}$ with $\nu^{-3}$ in~\cite[Lemma~35]{BGKS}.
In turn, this allows us to replace $\delta^{1/3}$ with  $\delta^{1/2}$ 
in~\cite[Lemma~38 and Theorem~51]{BGKS}.

We now define the logarithmic height of an algebraic number
$\alpha \ne 0$  as the logarithmic height 
of its minimal polynomial.

We need a slightly more general form of a result of
Chang~\cite{Chang}. In fact, this is exactly the statement that
is established in the proof of~\cite[Lemma~2.14]{Chang},
see~\cite[Equation~(2.15)]{Chang}.

\begin{lemma}
\label{lem:SmallZero} Let $P_1, \ldots, P_N, R \in \Z[Z_1, \ldots,
Z_n]$ be $N+1 \ge 2$ polynomials in $n$ variables of degree at
most $D$ and of logarithmic height at most $H\ge1$. If  the
zero-set
$$
P_1(Z_1, \ldots, Z_n) = \ldots =P_N(Z_1, \ldots, Z_n) = 0 \quad
\text{and}\quad
R(Z_1, \ldots, Z_n) \ne  0
$$
is not empty then it has a point $(\beta_1, \ldots, \beta_n)$
in an extension $\K$ of $\Q$ of degree $[\K:\Q]\le C_1(D,n)$
such that its logarithmic height is at most $C_2(D,n,N) H$,
where $C_1(D,n)$ depends only on $D$, $n$ and $C_2(D,n,N)$ depends
only on $D$, $n$ and $N$.
\end{lemma}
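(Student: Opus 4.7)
The natural approach is to reduce the statement to an effective Nullstellensatz / arithmetic Bezout situation. First I would apply the Rabinowitsch trick: introduce a fresh variable $T$ and augment the system with the single polynomial equation
$$
1 - T \cdot R(Z_1,\ldots,Z_n) = 0.
$$
The augmented system consists of $N+1$ polynomials in $n+1$ variables of total degree at most $D+1$ and logarithmic height at most $H+1$. By the standard Nullstellensatz argument, this augmented system has a common zero over $\overline{\Q}$ if and only if the original constructible set is non-empty, and the projection of any such zero onto the first $n$ coordinates lands in the set in question.

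Next, I would pass to an irreducible component $W$ of the affine variety cut out by the augmented system. By the geometric Bezout theorem, $\deg W$ is bounded by a function of $D$ and $n$ alone, essentially $(D+1)^{n+1}$. If $m = \dim W \ge 1$, cut $W$ by $m$ generic rational hyperplanes with small integer coefficients (with coefficients bounded in terms of $D$ and $n$) to obtain a non-empty zero-dimensional subvariety $W_0 \subset W$ whose degree does not exceed $\deg W$. Every point of $W_0$ then has coordinates in an algebraic extension $\K/\Q$ with $[\K:\Q] \le \deg W \le C_1(D,n)$, which secures the degree part of the conclusion.

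The last and most delicate step is bounding the logarithmic height of the coordinates. Here I would invoke an arithmetic Bezout theorem in the style of Bost, Gillet and Soule, or Philippon, or the more recent refinement of D'Andrea, Krick and Sombra already cited in Lemma~\ref{lem:Hilb}: the canonical height of the zero-cycle $W_0$ is linearly controlled in terms of the degrees, heights and number of the defining polynomials, and the logarithmic heights of the coordinates of its points inherit a bound of the shape $C_2(D,n,N)\,H$. The dependence on $N$ enters only through this arithmetic height estimate; the field-of-definition degree depends only on $D$ and $n$ because it is governed by the degree of a single irreducible component.

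The main obstacle is the quantitative height bound in this last step: extracting an explicit control of the form $C_2(D,n,N)\,H$ requires nontrivial input from arithmetic intersection theory (or, alternatively, an elimination-theoretic analysis via Chow forms and $u$-resultants applied to the chosen component and then specialised to the generic linear cuts). This is precisely the content of Chang's argument leading to~\cite[Equation~(2.15)]{Chang}, which is what the authors invoke directly rather than reproving here.
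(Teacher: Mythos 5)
Your proposal does not follow the paper, because the paper offers no proof at all: the authors simply remark that the statement ``is exactly the statement that is established in the proof of~\cite[Lemma~2.14]{Chang}'', pointing to~\cite[Equation~(2.15)]{Chang}, and use it as a black box. Your sketch is a sensible reconstruction of the kind of argument behind that citation: the Rabinowitsch trick to turn the condition $R\ne 0$ into an equation $1-TR=0$ in one extra variable, the refined Bezout inequality to bound the degrees of the irreducible components (and hence the degree of the field of definition of a point) by a quantity depending only on $D$ and $n$, and an arithmetic Bezout/height estimate (Bost--Gillet--Soul\'e, Philippon, or D'Andrea--Krick--Sombra style, or equivalently a Chow-form elimination argument) to get the linear-in-$H$ height bound with a constant depending on $D$, $n$, $N$ --- and you correctly note that this last quantitative step is exactly what Chang's argument supplies. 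Two details you would need to nail down to make this self-contained: (i) an irreducible component over $\overline{\Q}$ need not be defined over $\Q$, so after the generic integer hyperplane cuts you should pass to the Galois orbit (or to a $\Q$-irreducible component) to justify the bound $[\K:\Q]\le C_1(D,n)$ via the total Bezout degree; and (ii) the generic hyperplanes must be chosen with integer coefficients of controlled size so that their contribution to the height of the zero-dimensional cut is absorbed into $C_2(D,n,N)H$ (using $H\ge 1$). Since you invoke the height machinery at the same level of generality at which the paper invokes Chang, there is no substantive gap relative to what the paper itself establishes; yours is simply the unwound version of the cited argument, which buys transparency about where the dependence on $N$ (only in the height constant) and on $D,n$ (in the degree bound) comes from.
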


 \subsection{Product sets in number fields}
For a set $\cA$ in an arbitrary semi-group, we use  $\cA^{(\nu)}$ to denote the {\em $\nu$-fold product set}, that is
$$
\cA^{(\nu)}=\{a_1\ldots a_\nu~:~ a_1,\ldots, a_\nu \in \cA\}.
$$

We  recall the following  result given in~\cite[Lemma~29]{BGKS}, 
which in turn generalises~\cite[Corollary~3]{BKS}.

\begin{cor}
\label{cor:ProdSet} Let $\K$ be a finite extension of $\Q$ of
degree $D = [\K:\Q]$. Let $\cC \subseteq \K$ be a finite set with
elements of  logarithmic height at most $H\ge2$.
Then we have
$$
\#\cC^{(\nu)} > \exp\(-c(D,\nu) \frac{H}{\sqrt{\log H}}\) (\#\cC)^\nu,
$$
where $c(D,\nu)$ depends only on $D$ and $\nu$.
\end{cor}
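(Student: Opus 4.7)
The plan is to reduce this multiplicative statement in $\K$ to an additive sumset problem in an integer lattice, and then apply a Pl\"unnecke--Ruzsa type energy estimate, following the scheme of Bukh--Konyagin--Shparlinski~\cite{BKS}. The starting observation is that each $\alpha\in\cC$ has logarithmic height at most $H$, hence its fractional ideal in $\cO_\K$ involves only prime ideals of norm at most $\exp(O(H))$. Let $S$ denote the union of the archimedean places of $\K$ together with all finite primes appearing in the factorization of some $\alpha\in\cC$; then $\cC\subseteq\cO_S^*$. By the Dirichlet--Chevalley unit theorem, $\cO_S^*/\mu$ is a free abelian group, and the logarithmic embedding
$$
\ell(\alpha)=(\log|\alpha|_v)_{v\in S}
$$
is a homomorphism onto a lattice in $\R^{|S|}$ whose kernel is the finite group $\mu\subseteq\K^*$ of roots of unity, with $w_\K=\#\mu\le c'(D)$.

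Set $\tilde{\cC}=\ell(\cC)$. Since multiplication in $\K^*$ becomes addition under $\ell$, we have
$$
\#\cC^{(\nu)}\;\ge\;w_\K^{-(\nu-1)}\,\#(\nu\tilde{\cC}),
$$
where $\nu\tilde{\cC}$ is the iterated sumset in $\R^{|S|}$. The product formula together with the height bound ensures that every coordinate of $\ell(\alpha)$ is bounded in absolute value by $O_D(H)$, so $\tilde{\cC}$ lies in a box of side $O_D(H)$ in $\R^{|S|}$.

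It therefore suffices to bound the multiplicative energy
$$
E_\nu(\cC)=\#\bigl\{(\mathbf{a},\mathbf{b})\in\cC^\nu\times\cC^\nu\;:\;a_1\cdots a_\nu=b_1\cdots b_\nu\bigr\}
$$
from above, since Cauchy--Schwarz gives $\#\cC^{(\nu)}\ge(\#\cC)^{2\nu}/E_\nu(\cC)$. The target estimate $E_\nu(\cC)\le\exp(c(D,\nu)H/\sqrt{\log H})\,(\#\cC)^\nu$ is obtained by counting solutions of the corresponding additive equation in $\tilde{\cC}$: one stratifies $\tilde{\cC}$ according to the number of nonzero coordinates (equivalently, the number of distinct prime ideals in the factorization) and applies the Pl\"unnecke--Ruzsa inequality in each slice.

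The main technical obstacle is extracting the $\sqrt{\log H}$ saving in the exponent rather than a bare $H$. This ultimately comes from a Hardy--Ramanujan type concentration: all but a negligible fraction of elements of $\cC$ have $\sim\log\log\exp(H)\sim\log H$ distinct prime ideal factors, and applying Pl\"unnecke--Ruzsa in $\Z^r$ with effective rank $r\sim\log H$ yields a correction of only $\exp(H/\sqrt{\log H})$ rather than $\exp(H)$. Keeping the implicit constant dependent only on $D$ and $\nu$ requires one to absorb the dependence on $|S|$, which grows with both $H$ and $\#\cC$, into the final exponential factor; this bookkeeping, rather than any deeper input, is the most delicate step of the argument.
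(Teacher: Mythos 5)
Your high-level frame (pass to multiplicative energy via Cauchy--Schwarz, then exploit prime-ideal factorizations and the statistics of the number of prime factors) is indeed the spirit of the argument behind this statement; note, however, that the paper itself does not prove the corollary at all -- it is quoted verbatim from~\cite[Lemma~29]{BGKS}, which generalises~\cite[Corollary~3]{BKS}, so what has to be judged is whether your sketch actually establishes the bound, and it does not. The decisive step, the energy estimate
$$
E_\nu(\cC) \le \exp\(c(D,\nu)\frac{H}{\sqrt{\log H}}\)(\#\cC)^\nu ,
$$
is asserted rather than proved, and the tool you invoke for it cannot deliver it: the Pl\"unnecke--Ruzsa inequality bounds sizes of iterated sumsets \emph{given} a small doubling hypothesis; it gives no upper bound on additive (or multiplicative) energy, and no doubling hypothesis on $\tilde\cC$ is available or even relevant here. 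No mechanism is offered by which ``effective rank $\sim\log H$'' produces the specific factor $\exp(H/\sqrt{\log H})$, and stratifying $\tilde\cC$ by the number of nonzero coordinates only reformulates the problem. Since the Cauchy--Schwarz reduction and the passage to the $S$-unit lattice are essentially content-free (the logarithmic embedding is injective up to roots of unity, so counting products in $\cC$ and counting sums in $\tilde\cC$ are the same problem), everything of substance in the corollary is concentrated in exactly the step you have left unproved.

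A second genuine gap is the appeal to Hardy--Ramanujan type concentration: that almost all integers (or ideals) of norm at most $e^H$ have about $\log H$ prime factors says nothing about an \emph{arbitrary} finite set $\cC$ of elements of height at most $e^H$, which may be very sparse and may consist entirely of atypical elements (for instance smooth elements with of order $H/\log H$ prime ideal factors), so ``all but a negligible fraction of elements of $\cC$'' cannot be justified. The actual proofs in~\cite{BKS} and~\cite{BGKS} must, and do, treat arbitrary sets, by a careful combinatorial analysis of how cancellation can occur when reduced fractions (respectively, elements with bounded numerator and denominator ideals) are multiplied; this is precisely where the exponent $H/\sqrt{\log H}$, rather than the naive divisor-function exponent $H/\log H$, emerges. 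Your closing remark that absorbing the dependence on $\#S$ is ``the most delicate step'' misplaces the difficulty: the delicate step is the uniform-in-$\cC$ energy bound itself, which your proposal does not supply.
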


 \subsection{Product sets of consecutive values of rational functions in prime fields}
 
We now show that for a nontrivial rational function $f/g \in \F_p(X)$
and an integer $h\ge 1$, the set 
formed by $h$ consecutive values of $f/g$ cannot be all inside 
a small multiplicative subgroup $\cG \subseteq \F_p^*$. 
For the linear fractional function $(X+s)/(X+t)$ this has been obtained in~\cite[Lemma~35]{BGKS}.

\begin{lemma}
\label{lem:ProdSet} 
Let $\nu \ge 1$ be a fixed integer.
Assume
that for some sufficiently large positive integer $h$ and prime
$p$ we have
$$
h < p^{c(d)\nu^{-2d}},
$$
where $c(d)$ depends only on $d$. 
For two distinct monic 
polynomials $f,g\in \F_p$ of degrees $d$,   we consider the set
$$
\cA = \left\{\frac{f(x)}{g(x)}~:~ 1\le x\le  h \right\} \subseteq
\F_p.
$$
Then
$$
\# \cA^{(\nu)} >  \exp\(-c(d,\nu) \frac{\log h  }{\sqrt{\log \log h }}\) h ^{\nu},
$$
where  $c(d,\nu)$ depends only on $\nu$ and $d$. 
\end{lemma}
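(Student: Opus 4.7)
The plan is to extend the strategy of~\cite[Lemma~35]{BGKS} (the $d=1$ case $f(X)=X+s$, $g(X)=X+t$) to arbitrary distinct monic $f,g\in\F_p[X]$ of degree $d$, by combining the effective Nullstellensatz of Lemma~\ref{lem:Hilb} with the number-field product-set lower bound of Corollary~\ref{cor:ProdSet}. Introduce $2d$ indeterminates $\mathbf{A}=(A_0,\ldots,A_{d-1})$ and $\mathbf{B}=(B_0,\ldots,B_{d-1})$ and let $\tilde f,\tilde g\in\Z[\mathbf{A},\mathbf{B}][X]$ be the corresponding generic monic polynomials of degree~$d$. For each pair of tuples $\mathbf{x}=(x_1,\ldots,x_\nu)$, $\mathbf{y}=(y_1,\ldots,y_\nu)\in\{1,\ldots,h\}^\nu$ form
$$
P_{\mathbf{x},\mathbf{y}}(\mathbf{A},\mathbf{B})
=\prod_{i=1}^{\nu}\tilde f(x_i)\prod_{j=1}^{\nu}\tilde g(y_j)
-\prod_{i=1}^{\nu}\tilde g(x_i)\prod_{j=1}^{\nu}\tilde f(y_j)\in\Z[\mathbf{A},\mathbf{B}],
$$
of total degree at most $2\nu$ and logarithmic height $O(\nu\log h)$, whose vanishing at the actual coefficient vector $(\mathbf{a},\mathbf{b})$ of the given $f,g$ modulo $p$ is exactly the multiplicative collision $\prod_i F(x_i)=\prod_j F(y_j)$ for $F=f/g$.

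Assume for contradiction $\#\cA^{(\nu)}\le M$; a standard Cauchy--Schwarz / multiplicative-energy argument then produces at least $h^{2\nu}/M$ collision tuples $(\mathbf{x},\mathbf{y})$. Partition them into a \emph{generic} subfamily (where $P_{\mathbf{x},\mathbf{y}}\equiv 0$ in $\Z[\mathbf{A},\mathbf{B}]$) and a \emph{non-generic} subfamily (where $P_{\mathbf{x},\mathbf{y}}\not\equiv 0$ but $P_{\mathbf{x},\mathbf{y}}(\mathbf{a},\mathbf{b})\equiv 0\pmod p$). The generic collisions are intrinsic to the generic rational function $\tilde F=\tilde f/\tilde g$; specialising $(\mathbf{A},\mathbf{B})$ at a generic algebraic point yields a number field $\K$ of degree bounded in terms of $d$ in which the values $\tilde F(1),\ldots,\tilde F(h)$ lie with logarithmic height $O(\log h)$, and at least $h/O(1)$ of them are distinct since $\tilde F$ is non-constant. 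Corollary~\ref{cor:ProdSet} applied to this specialised set bounds the number of generic collisions from above by $\exp\bigl(c(d,\nu)\log h/\sqrt{\log\log h}\bigr)\,h^\nu$.

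The non-generic collisions must then be controlled using Lemma~\ref{lem:Hilb}. If there are many, the point $(\mathbf{a},\mathbf{b})\in\F_p^{2d}$ lies mod $p$ on the common zero locus of a large family of non-zero polynomials of degree $O(\nu)$ and logarithmic height $O(\nu\log h)$. Using Lemma~\ref{lem:SmallZero} one can exhibit, for $f\ne g$, a polynomial $R\in\Z[\mathbf{A},\mathbf{B}]$ of controlled degree and height that vanishes on the common zero locus but is non-zero at $(\mathbf{a},\mathbf{b})$. Lemma~\ref{lem:Hilb} then furnishes a B\'ezout identity $\sum_k P_kQ_k=AR^r$ with $\log A\ll\nu^{2d+1}\log h$, and since $R(\mathbf{a},\mathbf{b})\ne 0$ in $\F_p$ this forces $p\mid A$, which is ruled out by the hypothesis $h<p^{c(d)\nu^{-2d}}$ when $c(d)$ is sufficiently small. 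Combining the two cases yields $h^{2\nu}/M\le\exp\bigl(c(d,\nu)\log h/\sqrt{\log\log h}\bigr)h^\nu$, which is the claimed lower bound on $M=\#\cA^{(\nu)}$.

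The main obstacle will be making the Nullstellensatz step rigorous with exactly the exponent $\nu^{-2d}$: one must carefully track how the total degree and logarithmic height of the family $\{P_{\mathbf{x},\mathbf{y}}\}$ propagate through Lemma~\ref{lem:Hilb}, and construct the auxiliary polynomial $R$ so that it simultaneously certifies that $(\mathbf{a},\mathbf{b})$ avoids the generic part and carries good height bounds via Lemma~\ref{lem:SmallZero}. This is precisely the step for which the refinement of~\cite{DKS} over~\cite{KiPaSo} matters, as the authors remark immediately after Lemma~\ref{lem:Hilb}.
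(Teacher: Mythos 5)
Your raw ingredients (the auxiliary polynomials $P_{\vec{x},\vec{y}}$, Lemmas~\ref{lem:Hilb} and~\ref{lem:SmallZero}, Corollary~\ref{cor:ProdSet}) are the same as the paper's, but the architecture of your argument --- a Cauchy--Schwarz collision count in which the Nullstellensatz is used to rule out the non-generic collisions --- fails at its central step. You claim that, whenever $f\ne g$, one can ``exhibit a polynomial $R$ of controlled degree and height that vanishes on the common zero locus but is non-zero at $(\vec{a},\vec{b})$ mod $p$,'' and thereby force $p\mid A$ and conclude that (many) non-generic collisions are impossible. No such $R$ can be produced in general: by the very definition of the non-generic family, every one of those polynomials vanishes at $(\vec{a},\vec{b})$ modulo $p$, so $(\vec{a},\vec{b})$ lies on the reduction of their common zero locus, and nothing prevents every characteristic-zero polynomial vanishing on that locus from also vanishing at $(\vec{a},\vec{b})$ mod $p$ (Lemma~\ref{lem:SmallZero} produces a small-height \emph{point} on a quasi-affine set; it does not produce such an $R$). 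If your step worked it would show that accidental multiplicative relations of $f/g$ modulo $p$ essentially cannot occur, giving $\#\cA^{(\nu)}\gg h^{\nu}/\nu!$ with no loss at all --- a stronger statement than the lemma, and not one this method delivers: the whole point of the BGKS-style argument is that such relations may exist and must be \emph{transferred}, not excluded. A secondary misuse: Corollary~\ref{cor:ProdSet} is a lower bound on a product set (an image size) and gives no upper bound on the number of collisions (multiplicative energy), so your bound for the ``generic'' collisions is also unjustified as stated --- though that part is harmless, since $P_{\vec{x},\vec{y}}\equiv 0$ forces $\vec{y}$ to be a permutation of $\vec{x}$, giving at most $\nu!\,h^{\nu}$ such pairs.

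The paper's proof uses the Nullstellensatz contradiction for a different, purely geometric purpose. One takes a maximal linearly independent subfamily $P_1,\ldots,P_N$ of the relations satisfied by $(\vec{a},\vec{b})$ mod $p$ and lets $\cV$ be their common complex zero set (which contains the diagonal). If $\cV$ were contained in the diagonal, then each $R_k=U_k-V_k$ vanishes on $\cV$, and Lemma~\ref{lem:Hilb} gives $P_1Q_{k,1}+\cdots+P_NQ_{k,N}=A_k(U_k-V_k)^{r_k}$ with $\log A_k\ll d(2\nu)^{2d}\log h$; evaluating at $(\vec{a},\vec{b})$ for a $k$ with $a_k\not\equiv b_k\pmod p$ forces $p\mid A_k$, impossible when $h<p^{c(d)\nu^{-2d}}$. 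So $\cV$ has an off-diagonal point, and Lemma~\ref{lem:SmallZero} supplies one, $(\bbeta,\bgamma)$, of height $O(\log h)$ in a number field of bounded degree. Since every collision $\prod_i f(x_i)/g(x_i)=\prod_i f(y_i)/g(y_i)$ in $\F_p$ means $P_{\vec{x},\vec{y}}$ is a linear combination of $P_1,\ldots,P_N$ and hence vanishes at $(\bbeta,\bgamma)$, every such collision is inherited by $F_{\bbeta}/G_{\bgamma}$, so $\#\cA^{(\nu)}\ge\#\cC^{(\nu)}$ with $\cC=\{F_{\bbeta}(x)/G_{\bgamma}(x):1\le x\le h\}$, and Corollary~\ref{cor:ProdSet} is then applied in the correct direction, as a lower bound on $\#\cC^{(\nu)}$. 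To repair your write-up you would have to abandon the energy dichotomy and adopt this image-comparison structure.
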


\begin{proof}  We closely follow the proof of~\cite[Lemma~35]{BGKS}. 
Let 
$$f(X)= X^d + \sum_{k=0}^{d-1} a_{d-k} x^k
\mand
g(X) = X^d + \sum_{\ell=0}^{d-1} b_{d-\ell} X^\ell.
$$ 
The idea is to move from the finite field to a number field, where 
we are in a position to apply Corollary~\ref{cor:ProdSet}.

We consider the collection $\cP\subseteq 
\Z[\vec{U}, \vec{V}]$, where 
$$
\vec{U} = (U_1, \ldots, U_d)
\mand  \vec{V} =(V_1,\ldots, V_d),
$$
of   polynomials
\begin{equation*}
\begin{split}
P_{\vec{x},\vec{y}}(\vec{U}, \vec{V}) =
\prod_{i=1}^\nu & \(x_i^d + \sum_{k=0}^{d-1} U_{d-k} x_i^k\)
 \(y_i^d + \sum_{\ell=0}^{d-1} V_{d-\ell} y_i^\ell\)\\
& -  \prod_{i=1}^\nu 
\(x_i^d + \sum_{\ell=0}^{d-1} V_{d-\ell} x_i^\ell\) \(y_i^d + \sum_{k=0}^{d-1} U_{d-k} y_i^k\),
\end{split}
\end{equation*}
where $\vec{x} = (x_1, \ldots, x_\nu)$ and
$\vec{y} = (y_1, \ldots, y_\nu)$ are integral vectors with
entries in $\cI:=[1,h]$ and such that
$$
P_{\vec{x},\vec{y}}(x_1, \ldots, x_d,y_1,\ldots, y_d) \equiv 0 \pmod p.
$$
Note that 
$$
P_{\vec{x},\vec{y}}(a_1, \ldots, a_d,b_1,\ldots, b_d) 
\equiv \prod_{i=1}^\nu  f(x_i)g(y_i)
-\prod_{i=1}^\nu  f(y_i)g(x_i) \pmod p.
$$

Clearly if $P_{\vec{x},\vec{y}}$ is identical to zero then, by the 
uniqueness of polynomial factorisation in the ring $ \F_p[\vec{U}, \vec{V}]$, 
the components of $\vec{y}$ are permutations of those of $\vec{x}$. 
So in this case we obviously obtain 
$$
\# \cA^{(\nu)} \ge \frac{1}{\nu!} \(\# f(\cI)\)^{\nu} \gg H^\nu.
$$
Hence, we now assume that $\cP$ contains non-zero polynomials.

Clearly,  every $P\in \cP$ is of degree at most $2\nu$
and of logarithmic height $O(\log h)$.

We take a family $\cP_0$ containing the largest possible number
$$
N \le (\nu +1)^{2d} - 1
$$
of linearly independent polynomials $P_1, \ldots, P_N \in \cP$,  and consider the
variety
$$
\cV: \ \{(\vec{U}, \vec{V}) \in \C^{2d}~:~P_1(\vec{U}, \vec{V}) = \ldots =P_N(\vec{U}, \vec{V}) = 0\}.
$$
Clearly $\cV \ne \emptyset$ as it contains the diagonal $\vec{U} = \vec{V}$. 

We claim that $\cV$ contains a point outside of the diagonal, 
that is, there is a point $(\bbeta,\bgamma)$ 
with $\bbeta , \bgamma \in \C^{d}$ and $\bbeta \ne \bgamma$.

Assume that $\cV$ does not contain a point outside of the diagonal.
Then for every $k=1, \ldots, d$, the polynomial 
$$
R_k(U_1, \ldots, U_d, V_1,\ldots, V_d) = U_k-V_k
$$
vanishes on $\cV$. 

Then by
Lemma~\ref{lem:Hilb} we see that there are polynomials
$Q_{k,1}, \ldots, Q_{k,N} \in \Z[\vec{U}, \vec{V}]$ and  positive integers
$A_k$ and $r_k$ with
\begin{equation}
\label{eq:b small birat}
\log A_k \le  c_0 d (2\nu)^{2d} \log h
\end{equation}
for some absolute constant $c_0$ (provided that $h$ is large enough)
and such that
\begin{equation}
\label{eq:HN UV}
P_1Q_{k,1}+ \ldots + P_NQ_{k,N} = A_k(U_k-V_k)^{r_k}.
\end{equation}
Since $f\ne g$, there is $k\in \{1, \ldots, d\}$ for which 
$a_k \not \equiv b_k \pmod p$. For this $k$ we substitute 
$$
(\vec{U}, \vec{V}) = (a_1, \ldots, a_d,b_1,\ldots, b_d)
$$
in~\eqref{eq:HN UV}. 
Recalling the definition of the set $\cP$ we now derive 
that $p \mid A_k$. Taking 
$$
c(d) = \frac{1}{c_0d2^d+1}
$$ 
in the condition of the lemma, we see
from~\eqref{eq:b small birat} that this is impossible.

Hence  the set
$$
\cU = \cV \cap [\vec{U} - \vec{V} \ne 0]
$$
is nonempty. Applying Lemma~\ref{lem:SmallZero}
we see that  it has a point $(\bbeta, \bgamma)$
with components of   logarithmic height $O(\log h)$
in an extension $\K$ of $\Q$ of degree $[\K:\Q] \le \Delta(d,\nu)$,
where $\Delta(d,\nu)$ depends only on $d$ and $\nu$.

Consider the maps
$\Phi:\  \cI^\nu  \to \F_p$
given by
$$
\Phi: \ \vec{x} = (x_1, \ldots, x_\nu) \mapsto \prod_{j=1}^\nu \frac{f(x_j)}{g(x_j)}
$$
and $\Psi:  \cI^\nu  \to \K$
given by
$$
\Psi: \ \vec{x} = (x_1, \ldots, x_\nu) \mapsto \prod_{j=1}^\nu 
\frac{F_{\bbeta}(x_j)}{G_{\bgamma}(x_j)},
$$
where 
$$F_{\bbeta}(X)= X^d + \sum_{k=0}^{d-1} \beta_{d-k} x^k
\mand
G_{\bgamma}(X) = X^d + \sum_{\ell=0}^{d-1} \gamma_{d-\ell} X^\ell.
$$
By construction of  $(\bbeta, \bgamma)$ we have that
$\Psi(\vec{x}) = \Psi(\vec{y})$ if $\Phi(\vec{x}) =
\Phi(\vec{y})$.
Hence
$$
\# \cA^{(\nu)} \ge \Im \Psi = \# \cC^{(\nu)},
$$
where $\Im \Psi$ is the image set of the map $\Psi$ and
$$
\cC = \left\{\frac{F_{\bbeta}(x)}{G_{\bgamma}(x)}~:~ 1\le x\le h \right\}
\subseteq \K.
$$
Using Corollary~\ref{cor:ProdSet}, we derive the result.
\end{proof}

We also recall the following bound which is a special case of a more general result from~\cite[Theorem~7]{GomShp}.

\begin{lemma}
\label{lem:ValSet}   If for two relatively prime  monic 
polynomials $f,g\in \F_p$ of degree $d\ge 1$, a  positive integer $h$
and a multiplicative subgroup $\cG \subseteq \F_p^*$ we have 
$$
\left\{\frac{f(x)}{g(x)}~:~ 1\le x\le  h \right\} \subseteq \cG.
$$
Then
$$
\#\cG \gg \min\{h^{2(1 -  \tau)+o(1)}, 
h^{2(1-\rho - \tau) +o(1)}p^{2\vartheta} \},
$$ 
where 
$$
\vartheta = \frac{1}{2d(d+2)}, \qquad  \rho = \frac{(d+1)^2}{2(d+2)}, 
\qquad \tau = \frac{1}{4d},
$$
and  the implied constant depends on $d$.
\end{lemma}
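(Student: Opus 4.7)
The plan is to derive this bound as a direct specialization of~\cite[Theorem~7]{GomShp}. First I would restate the general theorem in the form needed here: for a non-constant rational function $R = f/g \in \F_p(X)$ with coprime numerator and denominator, it controls the number of $x$ in a short interval whose $R$-value lies in a prescribed multiplicative subgroup $\cG \subseteq \F_p^*$, in terms of $\#\cG$, the length of the interval, the prime $p$, and the degree. Contrapositively, if \emph{all} $h$ values on the interval lie in $\cG$, then $\#\cG$ cannot be too small. So the first step is to verify the hypotheses: since $f$ and $g$ are coprime and both monic of degree $d$, the function $R = f/g$ has degree exactly $d$ as a map $\P^1 \to \P^1$, each fibre has at most $d$ preimages, and the hypothesis of the lemma supplies the maximal possible incidence count of $h$ (up to $O(d)$ from poles of $g$).

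Second, I would invoke the general theorem and track the parameters. The cited theorem produces two bounds, coming from two distinct regimes. The first regime is effective when $\cG$ is very small and rests on a Stepanov-type argument applied to an auxiliary polynomial built from $R$ and a generator of $\cG$; specialized to a degree-$d$ rational function it yields an upper bound on the number of incidences from which $\#\cG \gg h^{2(1-\tau)+o(1)}$ with $\tau = 1/(4d)$. The second regime handles larger subgroups and rests on Weil-type bounds for multiplicative character sums over values of $R$, combined with a Pl\"unnecke--Ruzsa-style amplification; specialization gives $\#\cG \gg h^{2(1-\rho-\tau)+o(1)} p^{2\vartheta}$ with $\vartheta = 1/(2d(d+2))$ and $\rho = (d+1)^2/(2(d+2))$. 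Taking the minimum over the two regimes reproduces the claimed bound.

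Finally, I would verify that the $o(1)$ terms are uniform in $p$ (the implied constants being allowed to depend on $d$, as in the statement of the lemma), and that the conditions of the general theorem — in particular, coprimality of numerator and denominator and non-triviality of $\cG$ — match the hypotheses in the lemma as stated. The main obstacle is bookkeeping rather than new mathematical content: one must be careful to align the normalization of ``degree'' used in~\cite[Theorem~7]{GomShp} (degree of the rational map versus the larger of $\deg f$, $\deg g$) with the $d$ appearing here, and to confirm that the fibre-multiplicity loss of $O(d)$ is absorbed into the $o(1)$ exponent. Once these matching issues are settled, no further work is required.
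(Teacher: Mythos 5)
Your proposal follows essentially the same route as the paper: both reduce the lemma to a direct specialization of~\cite[Theorem~7]{GomShp}, using that all $h$ values of $f/g$ on the interval lie in $\cG$ to turn the upper bound on incidences into a lower bound on $\#\cG$ (the paper applies the single inequality $h \le \(1+h^{\rho}p^{-\vartheta}\)h^{\tau+o(1)}(\#\cG)^{1/2}$, whose two cases $h^{\rho}p^{-\vartheta}\lessgtr 1$ give exactly the minimum you describe as ``two regimes''). Your bookkeeping concerns are the same ones the paper settles by taking, in the notation of~\cite{GomShp}, the degree parameter equal to $d$, whence $k=d(d+1)^2$ and $s=d^2+2d$ and the stated $\vartheta,\rho,\tau$.
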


\begin{proof}
By~\cite[Theorem~7]{GomShp}, applied with $d = e$ (and thus with 
$k = d(d+1)^2$, $s = d^2+2d$ and hence the above values of $\vartheta$, 
$\rho$ and $\tau$), we have 
$$
  \# \(\left\{\frac{f(x)}{g(x)}~:~ 1\le x\le  h \right\} \bigcap \cG\)
\le \(1 + h^{\rho}p^{-\vartheta}\) h^{\tau + o(1)} T^{1/2}
$$
where $T = \# \cG$. Under the condition of the lemma we have 
$$
 \# \(\left\{\frac{f(x)}{g(x)}~:~ 1\le x\le  h \right\} \bigcap \cG\) = h
$$
and the result follows. 
\end{proof}

\subsection{Proof of Theorem~\ref{thm:Small_e}}
\label{sec: proof small e}
We set 
$$
\nu = \fl{\(\frac{c(d)}{2 \delta}\)^{2d-1}} \mand h = \fl{e^{1/\nu}}+1, 
$$
where $c(d)$ is the constant of Lemma~\ref{lem:ProdSet}. 
We note that
$$
\frac{2\delta }{\nu} \le \frac{c(d)}{\nu^{2d}} 
$$
so as $e \to \infty$ we have
\begin{equation}
\label{eq:h small}
e^{1/\nu} < h  =  e^{1/\nu+o(1)} \le e^{2/\nu} \le p^{2\delta/\nu} \le  p^{c(d)/\nu^{2d}}. 
\end{equation}

We now query the oracles $\Oef$ and $\Oeg$  for $x =1, \ldots, h$. 

If the oracles return two distinct values then clearly $f\ne g$.
Now assume 
$$
f(x)^e = g(x)^e, \qquad x =1, \ldots, h.
$$
Therefore, the values 
$f(x)/g(x)$, $x =1, \ldots, h$ belong to the 
subgroup $\cG_e$ of $\F_p^*$ of order $e$. 
Hence for the set
\begin{equation}
\label{eq:set A}
\cA = \left\{\frac{f(x)}{g(x)}~:~ 1\le x\le  h \right\} \subseteq
\F_p
\end{equation}
for any integer  $\nu \ge 1$  we have 
\begin{equation}
\label{eq:A Ge}
\cA^{(\nu)}=\{a_1\ldots a_\nu~:~ a_1,\ldots, a_\nu \in \cA\}  \subseteq \cG_e.
\end{equation}

We see from~\eqref{eq:h small} that Lemma~\ref{lem:ProdSet} applies
which contradicts~\eqref{eq:A Ge} as we have $h^\nu > e$ 
for the above choice of the parameters. This concludes the  proof.

\subsection{Proof of Theorem~\ref{thm:Medium_e}}
\label{sec: proof med e}

We fix some $\varepsilon > 0$ and set
$$
h = \rf{e^{(1 +\varepsilon)/(2 -  2\tau))}}.
$$
We also 
note that for the above choice of $h$ 
and  for 
\begin{equation}
\label{eq:e small1}
e^{1+\varepsilon} \le e^{(1-\rho - \tau)(1 +\varepsilon)/(1 -  \tau)}p^{\vartheta}
\end{equation}
we have
$$
\min\{h^{2(1 -  \tau)}, 
h^{2(1-\rho - \tau)}p^{2\vartheta} \}\ge e^{1+\varepsilon} . 
$$
Therefore, under the condition~\eqref{eq:e small1},  we derive  from Lemma~\ref{lem:ValSet} 
that for the set $\cA$ given by~\eqref{eq:set A} we have
$\cA \not \subseteq \cG_e$. Proceeding as in the proof of 
Theorem~\ref{thm:Small_e}, we obtain an algorithm that requires $h$ queries. 

Clearly, for the above choice of $h$, the condition~\eqref{eq:e small1}
is satisfied if 
\begin{equation}
\label{eq:e small2}
e^{(1+\varepsilon)\rho/(1 -  \tau)} \le p^{\vartheta}.
\end{equation}
Taking 
$$
\eta =  \frac{\vartheta(1 -  \tau)}{\rho}\mand \kappa = \frac{1}{2 -  2\tau}
$$
we see that the condition~\eqref{eq:e small2} is equivalent to 
$e \le p^{\eta/(1 + \varepsilon)}$, under which 
we get an algorithm which requires $h = O\(e^{(1 +\varepsilon)\kappa}\)$ 
queries.
Since   $\varepsilon>0$ is arbitrary, the result now follows.

\section{Quantum and Randomized Interpolation}
\label{sec:rand_quant interp}
\subsection{Main results}

Here we present a quantum algorithm for the interpolation problem
of finding an unknown monic 
polynomial $f\in \F_q[X]$ of degree $d$ given 
the oracle  $\Oef$. We emphasise the difference between our 
settings where the oracle is classical and only the algorithm is
quantum and the settings of~\cite{vDam,vDamHallgIp} which employ 
the quantum analogue of the  oracle  $\Oef$.

We recall that the oracle $\Oef$ does not accept queries from 
field extensions of $\F_q$, and therefore, if $de > q$, we {\em cannot} interpolate $f^e$ from queries to  $\Oef$.

\begin{theorem}
\label{thm:Quant} 
Given an oracle $\Oef$ for some 
unknown 
monic
polynomial
$f$  of degree at most $d$,  for any $\varepsilon > 0$
there is a quantum algorithm to 
find with probability $1-\varepsilon$
 a polynomial $g$ such that $g\sim_e f$ 
in time $e^{d/2}\(d \log q  \log (1/\varepsilon)\)^{O(1)}$ and 
$O\(d\log q \log (1/\varepsilon)\)$ calls to $\Oef$.
\end{theorem}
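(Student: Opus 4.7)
The plan is to reduce interpolation to a quantum Grover search over the $e^d$-sized space of ``$e$-th root assignments'' on $d$ sample points, together with a classical consistency check on additional samples. I would begin by classically querying $\Oef$ at $n = O(d \log q \log(1/\varepsilon))$ uniformly random points $x_1,\ldots,x_n \in \F_q$, obtaining values $y_i = f(x_i)^e$. This exhausts the oracle-call budget of the theorem; no further queries to $\Oef$ are needed thereafter.

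The key structural observation is that a monic polynomial $g \in \F_q[X]$ of degree $d$ is uniquely determined by its values at any $d$ distinct points: writing $g(X) = X^d + h(X)$, the polynomial $h$ of degree at most $d-1$ is recovered from $d$ evaluations by Lagrange interpolation in $(d\log q)^{O(1)}$ time. Thus I take as search space
\[
\cS = \{(z_1,\ldots,z_d) \in \F_q^d : z_i^e = y_i,\ i = 1,\ldots, d\},
\]
whose cardinality is at most $e^d$ since $e \mid q-1$ implies each nonzero $y_i$ has exactly $e$ distinct $e$-th roots in $\F_q$ (and one can ensure all $y_i \neq 0$ by discarding roots of $f$, which occur with negligible density). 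For each $\vec z \in \cS$, denote the associated monic interpolant by $g_{\vec z}$ and mark $\vec z$ when $g_{\vec z}(x_i)^e = y_i$ holds for all $i = d+1,\ldots,n$. The true tuple $\vec z^* = (f(x_1),\ldots,f(x_d))$ is always marked since $g_{\vec z^*} = f$. Applying Grover's algorithm---in the BBHT variant since the number of marked items is a priori unknown---returns some marked $\vec z$ in $O(\sqrt{|\cS|}) = O(e^{d/2})$ evaluations of the marking predicate, each costing $(d \log q \log(1/\varepsilon))^{O(1)}$ classical time. Outputting $g_{\vec z}$ matches the claimed time and oracle-query complexities.

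The main obstacle is the correctness analysis: one must show that, with probability $\geq 1 - \varepsilon$ over the random $x_i$, every marked $\vec z$ yields $g_{\vec z} \sim_e f$. The natural strategy is to fix a candidate monic $g$ of degree $d$ with $g \not\sim_e f$ and bound the probability that $g(x)^e = f(x)^e$ at a uniformly random $x \in \F_q$. In the regime $de < q/2$, the polynomial $g^e - f^e$ is nonzero and of degree at most $de$, so vanishes at most $de$ times, giving per-point bad probability $\leq de/q \leq 1/2$; a union bound over the $\leq q^d$ candidate $g$'s then forces the $O(d \log q \log(1/\varepsilon))$ sample count stated in the theorem. The delicate case $de \gtrsim q$ resists the elementary root count and requires instead a Weil-type multiplicative character sum estimate to bound the density of $\{x \in \F_q : g(x)^e = f(x)^e\}$ in terms of whether $g \sim_e f$---this is precisely the scenario in which the equivalence relation $\sim_e$ is nontrivial, and where one would appeal to the character sum framework behind~\cite{RusShp} to exclude the non-$\sim_e$ spurious candidates.
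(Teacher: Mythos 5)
Your overall architecture is essentially the paper's: query $\Oef$ at a random test set of size $O(d\log q)$ plus $d$ interpolation points, view the hidden polynomial as one of at most $e^d$ assignments of $e$-th roots at the interpolation points, reconstruct each candidate by interpolation, mark a candidate when its $e$-th power matches the oracle values on the test set, and use Grover to obtain the $e^{d/2}$ factor, with the output guaranteed only up to $\sim_e$. The concrete gap is in how you set up the search space. To run Grover (or its BBHT variant) over $\cS=\{(z_1,\ldots,z_d): z_i^e=y_i\}$ you need an efficiently computable indexing of $\cS$, say by $\{0,\ldots,e-1\}^d$; this requires exhibiting one $e$-th root $z_i$ of each $y_i$ together with a generator $\zeta_e$ of the subgroup $\{u\in\F_q: u^e=1\}$, so that candidates take the form $z_i\zeta_e^{\alpha_i}$. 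Your proposal treats $\cS$ as directly searchable, but extracting these roots is not free: the paper does it with Shor's order-finding and discrete-logarithm algorithms in quantum polynomial time (this is the second quantum ingredient besides Grover), and notes in the proof of Theorem~\ref{thm:Rand} that classical root extraction (Adleman--Manders--Miller plus baby-step--giant-step) costs about $e^{1/2}(\log q)^{O(1)}$, which would also fit the stated bound --- but some such step must be supplied before the marking predicate can even be implemented.

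The second problem is that your correctness analysis stops exactly where the theorem becomes interesting. When $de<q-1$ the relation $\sim_e$ collapses to equality and elementary root counting suffices, as you note; the relevant regime is $de\gtrsim q$, and there you only say one ``would appeal to'' a Weil-type character-sum estimate. The paper makes this precise in Lemma~\ref{lem:Coinc} and Corollary~\ref{cor:Coinc-e}: for monic $g_1\not\sim_e g_2$ of degree $o(q^{1/2})$ and $e\le (q-1)/2$, at least $q/3$ of the points $x\in\F_q$ satisfy $g_1(x)^e\ne g_2(x)^e$. This constant-fraction separation for \emph{every} non-equivalent pair is what makes the union bound over the $q^{O(d)}$ candidates (equivalently, over all pairs in $\cS$, as the paper argues) succeed with only $O(d\log q)$ random test points, and it also surfaces the implicit hypotheses ($e\le(q-1)/2$, $d=o(q^{1/2})$) under which the theorem is actually established. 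As written, you have named the right tool but not proved the quantitative statement on which the whole argument rests, so the proposal is a correct outline of the paper's route with its central lemma and its root-indexing step left unproved.
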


Replacing quantum parts of the algorithm above
with classical (randomized) methods,
we obtain the following.

\begin{theorem}
\label{thm:Rand} 
Given an oracle $\Oef$ for some unknown monic polynomial
$f$  of degree at most $d$,  for any $\varepsilon > 0$
there is a {\em randomized} algorithm to 
find with probability $1-\varepsilon$
 a polynomial $g$ such that $g\sim_e f$ 
 in time $e^{d}\(d \log q  \log (1/\varepsilon)\)^{O(1)}$ and 
 $O\(d\log q \log (1/\varepsilon)\)$ calls to $\Oef$.
\end{theorem}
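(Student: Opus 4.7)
The plan is to de-quantize the algorithm of Theorem~\ref{thm:Quant} by replacing its Grover-style search over the space of candidate root tuples with exhaustive classical enumeration; this accounts precisely for the upgrade from $e^{d/2}$ to $e^{d}$ in the running time, while leaving the oracle query count unchanged. For brevity I describe the case where $f$ has degree exactly $d$; the case of smaller degree is handled by looping over all $d'\in\{0,\ldots,d\}$, at total cost $\sum_{d'=0}^d e^{d'}=O(e^d)$.

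First I would fix $d$ distinct evaluation points $x_1,\ldots,x_d\in\F_q$ (in the degenerate case $q\le d$ one simply queries all of $\F_q$ and interpolates directly, which is absorbed in the final bound) and call $\Oef$ at each to obtain $y_i=f(x_i)^e$. Since $e\mid q-1$, every nonzero $y_i$ has exactly $e$ distinct $e$-th roots in $\F_q$, so there are at most $e^d$ candidate tuples $(v_1,\ldots,v_d)$ with $v_i^e=y_i$. For each such tuple a single Lagrange interpolation produces the unique monic degree-$d$ polynomial
\[
g(X)=\prod_{i=1}^d(X-x_i)+L(X),
\]
where $L$ is the degree-$<d$ Lagrange interpolant through $(x_1,v_1),\ldots,(x_d,v_d)$; since the first summand vanishes at every $x_i$, one checks $g(x_i)=v_i$. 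The unknown $f$ is certainly on the resulting list, since $(f(x_1),\ldots,f(x_d))$ is a valid root tuple. I would then draw $m=\Theta(d\log q+\log(1/\eps))$ further random test points $z_1,\ldots,z_m\in\F_q$, call $\Oef$ at each, and retain any candidate $g$ with $g(z_j)^e=f(z_j)^e$ for all $j$.

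Correctness reduces to bounding the probability that a wrong candidate $g\not\sim_e f$ passes all $m$ tests. For this I would use the elementary collision count: for each $\zeta\in\cG_e$, the polynomial $f(X)-\zeta g(X)$ has degree at most $d$ with leading coefficient $1-\zeta$ (when $\zeta=1$ the difference $f-g$ is nonzero, because $f\not\sim_e g$ implies $f\ne g$), hence at most $d$ roots, giving $\#\{x\in\F_q:f(x)^e=g(x)^e\}\le de$. Thus, provided $q\ge 2de$, each test rejects a wrong $g$ with probability at least $1/2$, and a union bound over the $e^d$ candidates forces $m\ge\log_2(e^d/\eps)=O(d\log q+\log(1/\eps))$. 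Each interpolation, $e$-th root extraction, and polynomial evaluation runs in $\mathrm{poly}(d,\log q)$ time, so the full enumeration runs in $e^d(d\log q\log(1/\eps))^{O(1)}$ time, using $d+m=O(d\log q\log(1/\eps))$ queries, as claimed.

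The main obstacle I anticipate is the regime where $de$ is comparable to $q$, in which the elementary bound $de$ approaches $q$ and random testing loses its distinguishing power. In that case I would invoke the Weil estimate for multiplicative character sums (as in the proof of~\cite[Theorem~6]{RusShp} via~\cite[Theorem~11.23]{IwKow}) to sharpen the collision bound to density $1/e+O(dq^{-1/2})$, which is bounded away from $1$ whenever $q\gg d^2e^2$; in the small-$q$ regime remaining after this, $q$ itself is polynomial in $d$ and $e$, so a deterministic sweep of all of $\F_q$ fits comfortably within the claimed time bound.
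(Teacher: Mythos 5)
Your algorithm is essentially the paper's own proof: Theorem~\ref{thm:Rand} is obtained there by de-quantizing Theorem~\ref{thm:Quant}, i.e.\ querying $\Oef$ at $d$ interpolation points and at $O(d\log q)$ random test points, enumerating the $e^d$ candidate root tuples $z_j\zeta_e^{\alpha_j}$, interpolating each candidate, and keeping those consistent on the test set, with Grover's search replaced by exhaustive search; your union bound over the $e^d$ candidates plays the role of the paper's union bound over all pairs via Corollary~\ref{cor:Coinc-e}.

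Two points in your write-up need repair, although the first does not affect the final bound. First, you assert that $e$-th root extraction runs in time $\mathrm{poly}(d,\log q)$; classically this is not known, and it is precisely the issue the paper's proof dwells on: one builds a generator of the group of $e$-th roots of unity from random $r$-th power non-residues and extracts roots by Adleman--Manders--Miller in time $\sqrt{r}(\log q)^{O(1)}$ for each prime $r\mid e$. Since $\sqrt{e}\le e^{d}$, this stays within the time bound, but your sub-claim must be weakened to this. Second, your handling of the regime $q<2de$ fails as stated. The Weil estimate (Lemma~\ref{lem:Coinc}) gives collision density $1/e+O(dq^{-1/2})\le 1/2+O(dq^{-1/2})$, so it only requires $q\gg d^{2}$ together with $e\le (q-1)/2$ (the case $e=q-1$ being degenerate for $\sim_e$); your stated threshold $q\gg d^{2}e^{2}$ is essentially incompatible with $q<2de$, so your fallback sends, e.g., $d\ge 2$ with $e=(q-1)/2$ to a full sweep of $\F_q$, which already uses $q$ queries and thus violates the $O(d\log q\log(1/\varepsilon))$ query bound (and, if candidates are checked against the full table, the time bound as well). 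The fix is exactly the paper's Corollary~\ref{cor:Coinc-e}: justify the random tests by the Weil bound whenever $d=o(q^{1/2})$, making the elementary bound $de$ unnecessary; only $q=O(d^{2})$ then remains, a regime the paper likewise excludes implicitly.
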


The proofs of Theorems~\ref{thm:Quant} and~\ref{thm:Rand} are 
given below in Sections~\ref{sec:Quant} and~\ref{sec:Rand},
respectively.

\subsection{Coincidences among $e$th powers of 
polynomials}

The following result is immediate from the Weil bound on 
multiplicative character sums,  see~\cite[Theorem~11.23]{IwKow}.

\begin{lemma}
\label{lem:Coinc}  Let $g_1,g_2 \in \F_q[X]$ be two monic 
polynomials of degree at most $d$ with  $g_1\not \sim_e g_2$.
Then 
$$
\# \{x \in \F_q~:~ g_1(x)^e  =  g_2(x)^e\} 
= \frac{q}{e} + O(dq^{1/2}).
$$
\end{lemma}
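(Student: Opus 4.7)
The plan is to apply the Weil bound on multiplicative character sums (referenced as~\cite[Theorem~11.23]{IwKow}) after a standard orthogonality manipulation. I would first decompose the count: the $R\le d$ common roots of $g_1$ and $g_2$ trivially satisfy the equation; points where exactly one of $g_1(x),g_2(x)$ vanishes fail it; and on $\{x:g_1(x)g_2(x)\neq 0\}$ the condition $g_1(x)^e=g_2(x)^e$ is equivalent to $g_1(x)/g_2(x)\in\mu_e$, where $\mu_e\subset\F_q^*$ is the group of $e$th roots of unity (of size $e$, since $e\mid q-1$).

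Next I expand the indicator of $\mu_e$ via character orthogonality. Let $M\le\widehat{\F_q^*}$ denote the subgroup of characters trivial on $\mu_e$; by duality $|M|=(q-1)/e$. Then $\mathbf{1}[y\in\mu_e]=\frac{e}{q-1}\sum_{\chi\in M}\chi(y)$, and substituting yields
$$
N \;=\; R\;+\;\frac{e}{q-1}\sum_{\chi\in M}T_\chi,\qquad T_\chi\;:=\;\sum_{\substack{x\in\F_q\\ g_1(x)g_2(x)\neq 0}}\chi\!\left(\frac{g_1(x)}{g_2(x)}\right).
$$
The trivial character $\chi_0\in M$ contributes $\tfrac{e}{q-1}\bigl(q-O(d)\bigr)$, yielding the main term, and the task reduces to bounding the $|M|-1$ remaining summands.

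For each nontrivial $\chi\in M$, the Weil bound delivers $|T_\chi|\le (2d-1)q^{1/2}$, provided $g_1/g_2\in\F_q(X)$ is not a perfect $\ord(\chi)$th power (up to a multiplicative constant). Summing over the $|M|-1\le (q-1)/e$ nontrivial characters and multiplying by the prefactor $e/(q-1)$, the total error collapses to $O(dq^{1/2})$, independent of $e$.

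The main obstacle is verifying Weil's ``non-perfect-power'' hypothesis uniformly for every nontrivial $\chi\in M$. The assumption $g_1\not\sim_e g_2$ immediately handles characters of maximal order $(q-1)/e$, since the excluded identity $g_1/g_2=cH^{(q-1)/e}$ with nonconstant $H\in\F_q(X)$ is exactly the definition of $g_1\sim_e g_2$. For a character of smaller order $m\mid(q-1)/e$, the identity $g_1/g_2=cH^m$ with nonconstant $H$ would force $m\cdot\deg H\le 2d$, so only $m\le 2d$ need attention; these correspond to $O_d(1)$ exceptional characters whose aggregate contribution remains absorbed within the $O(dq^{1/2})$ budget.
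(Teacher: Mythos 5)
Your overall route --- discard the $O(d)$ points where $g_1g_2$ vanishes, detect $g_1(x)/g_2(x)\in\mu_e$ by averaging over the $(q-1)/e$ multiplicative characters trivial on $\mu_e$, extract the main term $q/e$ from the trivial character, and apply the Weil bound to the rest --- is exactly the argument the paper compresses into its one-line appeal to the Weil bound, and you are right that the only delicate point is the ``non-perfect-power'' hypothesis for every nontrivial $\chi$ in that subgroup. But your resolution of that point in the last paragraph is wrong. If $\chi$ has order $m>1$ and the identity $g_1/g_2=cH^{m}$ actually holds, then $\chi(g_1(x)/g_2(x))=\chi(c)$ for every $x$ outside the zeros and poles, so $|T_\chi|=q-O(d)$: Weil gives no cancellation at all for such a character, and there is nothing of size $O(q^{1/2})$ to absorb. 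Moreover, because $g_1,g_2$ are monic, any identity $g_1/g_2=cH^m$ forces $c$ to be an $m$th power, so one may take $c=1$; the exceptional characters then all contribute with the same sign, and after the prefactor $e/(q-1)$ each adds roughly $e$ to the count. When $e\gg dq^{1/2}$ this is far outside the claimed $O(dq^{1/2})$ error, so counting the exceptional characters as $O_d(1)$ does not save the estimate.

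This is not merely an expository gap: the hypothesis $g_1\not\sim_e g_2$ only rules out $g_1/g_2$ being a $(q-1)/e$th power, i.e.\ it only controls the characters of exact order $(q-1)/e$, and the lemma's conclusion genuinely fails for pairs where a smaller-order power identity holds. For instance, take $g_1=(X+1)^2$, $g_2=(X+2)^2$ and $e=(q-1)/6$; then $g_1\not\sim_e g_2$, yet $g_1(x)^e=g_2(x)^e$ exactly when $(x+1)/(x+2)\in\mu_{2e}$, which happens for $2e-1\approx q/3$ values of $x$, not $q/e+O(dq^{1/2})$. So the step ``aggregate contribution remains absorbed within the $O(dq^{1/2})$ budget'' cannot be repaired as written: one needs the stronger assumption that $g_1/g_2$ is not a nontrivial perfect $m$th power for any divisor $m>1$ of $(q-1)/e$ (a caveat the paper's own one-line proof also glosses over), or else a weaker conclusion such as an error term of the shape $O(de+dq^{1/2})$, which is what your argument actually proves.
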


We now immediately conclude. 

\begin{cor}
\label{cor:Coinc-e}  Let $g_1,g_2 \in \F_q[X]$ be two monic 
polynomials of degree  $o(q^{1/2})$ with  $g_1\not \sim_e g_2$.
Then for any $e \le (q-1)/2$ and a sufficiently large $q$
$$
\# \{x \in \F_q~:~ g_1(x)^e  \ne  g_2(x)^e\} \ge \frac{1}{3} q.
$$
\end{cor}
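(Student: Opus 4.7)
The plan is a direct computation from Lemma~\ref{lem:Coinc}. First I would apply that lemma to write
$$
\#\{x \in \F_q : g_1(x)^e = g_2(x)^e\} = \frac{q}{e} + O(dq^{1/2}),
$$
and then pass to the complement in $\F_q$, which gives
$$
\#\{x \in \F_q : g_1(x)^e \ne g_2(x)^e\} = q\(1-\frac{1}{e}\) + O(dq^{1/2}).
$$

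Next I would separate the main term from the error. In the main case $e \ge 2$ (which is what the Weil-bound hypothesis buys us here), we have $1-1/e \ge 1/2$, so the first term is at least $q/2$. Since $d = o(q^{1/2})$ by hypothesis, the error term $O(dq^{1/2})$ is $o(q)$, and in particular is bounded by $q/6$ once $q$ is sufficiently large. Combining these two estimates yields the desired lower bound $q/2 - q/6 = q/3$. Note that the upper bound $e \le (q-1)/2$ from the statement is not doing any work beyond excluding the uninteresting endpoint; what we actually use is $e \ge 2$.

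The degenerate case $e=1$ is not covered usefully by Lemma~\ref{lem:Coinc}, so I would dispatch it separately: then $g_1 \not\sim_e g_2$ forces $g_1 \ne g_2$, whence $g_1 - g_2$ is a nonzero polynomial of degree at most $d-1$ with at most $d-1 = o(q^{1/2})$ roots, leaving at least $q-d+1 \ge q/3$ non-coincidences for large $q$. There is essentially no genuine obstacle in this argument: the corollary is an immediate consequence of Lemma~\ref{lem:Coinc}, and the quantitative hypothesis $d = o(q^{1/2})$ is calibrated precisely so that the Weil error $O(dq^{1/2})$ is asymptotically dominated by the main term $q(1-1/e)$.
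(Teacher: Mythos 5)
Your proposal is correct and follows essentially the same route as the paper, which derives the corollary immediately from Lemma~\ref{lem:Coinc} by passing to the complement and noting that the main term $q(1-1/e)\ge q/2$ dominates the error $O(dq^{1/2})=o(q)$. Your separate dispatch of the degenerate case $e=1$ (and the observation that $e\le (q-1)/2$ merely excludes the endpoint $e=q-1$, where $g_1\not\sim_e g_2$ would force $g_1=g_2$) is a small extra care that the paper leaves implicit.
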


\subsection{Proof of Theorem~\ref{thm:Quant}}
\label{sec:Quant}

Let $\cS$ stand for the monic polynomials of degree at most $d$.
By Corollary~\ref{cor:Coinc-e}, a random choice of elements $x \in \F_q$ gives
with probability at least $0.99$
a set $T$ of size $O(\log |\cS|)=O(d\log q)$ 
such that for every pair $f,g\in \cS$ we have
$f(a)^e=g(a)^e$ for every $a\in T$ if and only if
$f\sim_e g$.

We continue with picking $d$ 
different elements $a_1,\ldots,a_d$ and use the oracle
$\Oef$ to obtain the values $b_j=f(a_j)^e$, $j=1,\ldots,d$, 
as well as to get the values $b(a)=f(a)^e$ for every $a\in T$. 

Using Shor's order finding and discrete logarithm algorithms~\cite{Shor}
we can also compute a generator $\zeta_e$ for the multiplicative
subgroup $\{u\in \F_q:u^e=1\}$ and for every $j$
an element $z_j\in \F_q$ such that $z_j^e=b_j$.

The cost of the steps performed so far is
polynomial in 
$\log q$ and $d$.
Let $E=\{0,\ldots,e-1\}$. For a tuple $\balpha=(\alpha_1,\ldots,\alpha_d)$
from $E^d$, let $f_{\balpha}$ be the monic 
polynomial of degree at most $d$ such that 
$f_{\balpha}(a_j)=z_j \zeta_e^{\alpha_j}$, $j=1,\ldots,d$.
For any specific tuple $\alpha$,
the polynomial $f_{\balpha}$ can be computed by simple interpolation
in time polynomial in $d\log q$.

We use Grover's search~\cite{Grover} over $E^d$ to find a tuple
$\balpha$ with probablity at least $0.99$ such that 
$f_{\balpha}^e(a)=b(a)$ for every $a\in T$.
The cost of this part is bounded by
$O(e^{d/2})$ times  
a polynomial in $\log q$ and $d$. 
Repeating the whole procedure $O(\log (1/\varepsilon))$
times
we achieve the desired probability level, which concludes the proof.

\subsection{Proof of Theorem~\ref{thm:Rand}}
\label{sec:Rand}
Observe that a generator for the group
$\{u\in \F_q:u^e=1\}$ as well as
elements $z_j$ with $z_j^e=b_j$ can be found
by simple classical algorithms of complexity
bounded by $e^{1/2}(\log q)^{O(1)}$, that is, 
even within the complexity bound of Theorem~\ref{thm:Quant}. 
Indeed, assume that for every prime $r$ diving $e$
we have an element $g_r\in \F_q$ which is not an $r$th 
power of an $\F_q$ element. 
Such elements can be found in time $(\log q)^{O(1)}$
using random choices.
The product of
appropriate powers of the elements $g_r$ is 
a generator for the group of the $e$th roots of unity. 

For computing an $e$th roots of $b_j$ it is sufficient to be able to take
$r$th root of an arbitrary field element $y$ for every prime divisor 
$r$ of $e$. This task can be accomplished in time $\sqrt{r}(\log q)^{O(1)}$ as in the algorithm of Adleman, Manders and Miller~\cite{AMM} instead of
the brute force one that uses Shanks' baby step-giant step method for
computing discrete logarithms in groups of order $r$, see~\cite[Section~5.3]{CrPom}.

Therefore, if we replace Grover's
search~\cite{Grover}  over $E^d$ with a classical search we obtain
a classical randomised algorithm of
complexity 
$e^d (d \log q \log(1/\varepsilon))^{O(1)}$.

\subsection{Further Remarks}
Under Generalised Riemann Hypothesis we can derandomize the proof of Theorem \ref{thm:Rand}. 	
If $q=p$ is a prime then a generator for the group of $e$th roots of
unity can be found in deterministic polynomial time.
If, furthermore, $e\le p^{\delta}$
or $e\le p^{\eta-\delta}$ 
for some fixed $\delta >0$, then
we could use the test of 
Theorem~\ref{thm:Small_e} or 
Theorem~\ref{thm:Medium_e}
to obtain a deterministic algorithm of complexity
$e^{d+c_0(d)\delta^{1/(2d-1)}}(d\log p)^{O(1)}$ or
$e^{d+\kappa+o(1)}(d\log p)^{O(1)}$, respectively.

\section{Comments and open problems}

One can obtain analogues of Theorems~\ref{thm:Small_e} and~\ref{thm:Medium_e}
in the settings of high degree extensions of finite fields.
More precisely, if $q= p^n$ for a fixed $p$ and growing $n$, we
write $\F_{q} \cong \F_p[X]/\left\langle\psi(X)\right\rangle$ for a fixed irreducible polynomial $\psi\in \F_p[X]$ of degree $n$. Then one can attempt to transfer the technique used in the proofs of Theorems~\ref{thm:Small_e} 
and~\ref{thm:Medium_e} to this case where a role of a short interval 
of length $h$ is now played by the set of polynomials of 
degree at most $h$. This approach has been used in~\cite{CillShp,Shp}
for several related problems. We also note that a version of 
effective Hilbert's Nullstellensatz for function fields, which is needed
for this approach, has recently been given by D'Andrea, Krick and Sombra~\cite{DKS}. 

We remark that we do not know how to take any advantage of actually 
knowing $g$, and get stronger version of Theorems~\ref{thm:Small_e} 
and~\ref{thm:Medium_e} in this case, 
like, for example, in~\cite[Section~3.2]{BGKS}.

\section*{Acknowledgement}

This research  was  supported in part by 
the Hungarian Scientific Research Fund (OTKA) Grant NK105645
(for G.I.); Singapore Ministry of Education and the 
National Research Foundation Tier 3 Grant MOE2012-T3-1-009
(for G.I. and M.S.); the Hausdorff Grant~EXC-59 (for M.K.); 
European Commission
IST STREP Project QALGO~600700 
and the French ANR Blanc Program Contract ANR-12-BS02-005 (for M.S.);  
Research-I Foundation CSE
and Hausdorff Center Bonn (for N.S.);
 the Australian Research Council Grant DP140100118 (for I.S.).

\bibliographystyle{amsalpha}
\bibliography{refs}
 
\end{document}